\let\originalleft\left
\let\originalright\right
\renewcommand{\left}{\mathopen{}\mathclose\bgroup\originalleft}
\renewcommand{\right}{\aftergroup\egroup\originalright}
 \newcommand{\added}[1]{{#1}}
 \newcommand{\deleted}[1]{{}}
 \newcommand{\added}[1]{{\color{blue}#1}}
 \newcommand{\deleted}[1]{{\sout{\color{red}#1}}} 
\newtheorem{thm}{Theorem}[section]
\newtheorem{lemma}[thm]{Lemma}
\theoremstyle{definition}
\newtheorem{definition}[thm]{Definition}
\newtheorem{remark}[thm]{Remark}
\newcommand{\norm}[1]{\|#1\|}
\newcommand{\nrm}[1]{|#1|}
\newcommand{\EE}{{\mathbb E}}
\newcommand{\PP}{{\mathbb P}}
\newcommand{\R}{{\mathbb R}}
\newcommand{\bpm}{\begin{pmatrix}}
\newcommand{\epm}{\end{pmatrix}}
\newcommand{\mnmz}{\operatorname*{minimize}}
\newcommand{\mxmz}{\operatorname*{maximize}}
\newcommand{\st}{\operatorname{subject\ to}}
\newcommand{\eps}{{\varepsilon}}
\newcommand{\iLow}{i_{\rm low}}
\newcommand{\iHigh}{i_{\rm high}}
\newcommand{\nmin}{n_{\rm minibatch}}
\newcommand{\nepoch}{n_{\rm epoch}}
\newcommand{\nupdate}{n_{\rm update}}
\newcommand{\smin}{{\rm size}_{\rm minibatch}}
\newcommand{\umax}{u_{\rm max}}
\newcommand{\xdes}{x_{\rm des}}
\newcommand{\Umin}{U_{\rm min}}
\newcommand{\Umax}{U_{\rm max}}
\newcommand{\dmin}{d_{\rm min}}
\newcommand{\dmax}{d_{\rm max}}
\pgfplotsset{compat=1.3}
\pgfplotsset{
    legendStyleA/.style={%
        column sep = 0pt,
        legend columns = 1,
        font=\fontsize{6}{5}\selectfont,
        legend cell align={left},
        legend to name = grouplegendA},
    legendStyleB/.style={%
        column sep = 0pt,
        legend columns = 1,
        font=\fontsize{6}{5}\selectfont,
        legend cell align={left},
        legend to name = grouplegendB},        
}
\newenvironment{algorithmaux}[1][htb]{%
    \renewcommand{\ALG@name}{Algorithm (Auxiliary)}
   \begin{algorithm}[#1]%
  }{\end{algorithm}}
\begin{document}

\title{Machine learning approach to chance-constrained problems: An algorithm based on the stochastic gradient descent}
\date{December 11, 2018}

\author[1,2]{Luk\'a\v{s} Adam\thanks{adam@utia.cas.cz}}
\author[2,3]{Martin Branda}

\affil[1]{Southern University of Science and Technology, Shenzhen 518055, China}
\affil[2]{\'UTIA, The Czech Academy of Sciences, Pod Vod\'arenskou v{\v ez}\'i 4, 18208, Prague, Czech Republic}
\affil[3]{Faculty of Mathematics and Physics, Charles University, Sokolovsk\'a 83, 18675, Prague, Czech Republic}
\renewcommand\Authands{ and }

\maketitle

\begin{abstract}
We consider chance-constrained problems with discrete random distribution. We aim for problems with a large number of scenarios. We propose a novel method based on the stochastic gradient descent method which performs updates of the decision variable based only on looking at a few scenarios. We modify it to handle the non-separable objective. A complexity analysis and a comparison with the standard (batch) gradient descent method is provided. We give three examples with non-convex data and show that our method provides a good solution fast even when the number of scenarios is large.
\end{abstract}

\smallskip
\noindent \textbf{Keywords:} Stochastic programming, Chance-constrained programming, Quantile, Stochastic gradient descent, Machine learning, Large-scale

\noindent \textbf{AMS classification:}
90C15, 
90C26, 
49M05. 

\section{Intruduction}

In real-world problems, the data is often stochastic (random). Some examples include uncertain parameters, imprecise measurements or unknown future prices \cite{wiecek.2016}. Since deterministic models do not reflect this fact, they may provide a subpar solution. Stochastic models may provide an alternative. However, changing some variables from deterministic to stochastic brings several issues. The first one is the increased complexity. The second one is the question of how to handle the stochastic variables. As it often provides too conservative results, we do not use the robust optimization approach \cite{bental.nemirovski.1998} and focus instead on the stochastic optimization approach \cite{birge.louveaux.2011}. There, the usual strategy is to replace the stochastic objective by its expectation and impose probability on fulfilling the random constraints. This leads to chance constraints. The expectation describes the average behaviour while the chance constraints specify that the stochastic constraints have to be satisfied with a large probability.

First, we review recent contributions to nonlinear chance-constrained problems.
\cite{Ackooij.Henrion.2014} derived a formula representing the gradients of nonlinear chance constraints in the Gaussian and Student case as a certain integral over the sphere. 
\cite{Geletuetal2017} proposed an algorithm based on solving inner and outer approximations of the chance constrained problems. The approximations consist of two parametric nonlinear programming problems. Asymptotic convergence to an optimal solution is shown. 
\cite{Gonzalez.Heitsch.Henrion.2017} used an approach called spheric-radial decomposition of multivariate Gaussian distributions to solve a demanding problem of gas network design with uncertain demand. All these methods make use of a continuous distribution of the random vector. However, often only an approximation of the true underlying distribution via a finite number of samples is known. In such a case, recent results on nonlinear chance-constrained problems include:
\cite{Fengetal2014} generalized the approach based on the difference of two convex functions by proposing new smooth approximating functions and showed convergence to a stationary point.
\cite{adam.branda.2016a,adam.branda.heitsch.henrion.2018} derived strong and weak necessary optimality conditions. Based on a regularization technique, they proposed an algorithm converging to a stationary point.
\cite{vanAckooij2016} generalized the Benders' decomposition approach for minimizing convex non-differentiable functions over a combinatorial set.
\cite{Xie2017} derived new quantile cuts to strengthen the cutting plane approach for solving mixed-integer nonlinear chance-constrained problems. Since the generation of all cuts is difficult, the authors proposed a practical heuristic approach.
\cite{curtis.2018} introduced a sequential algorithm for solving nonlinear chance constrained problems. The method is based on an exact penalty function which is minimized sequentially by solving quadratic optimization subproblems with linear cardinality constraints.
\added{\cite{KannanLuedtke2018} used a stochastic approximation
method which relies on the multiobjective optimization when the chance constraints are moved to the objective whereas the real objective is bounded as a constraint. The difficult indicator function is replaced by its smooth approximation. They applied the projected stochastic subgradient algorithm and proved a theoretical convergence to stationary points of the smoothed problem which approximate the efficient frontier.}

To the best of our knowledge, all these algorithms either consider a specific continuous distribution or are only suitable for a distribution with a small number of scenarios. Since the number of scenarios should theoretically increase exponentially with the dimension of the random vector, a method which is able to handle a large number of scenarios is needed.

To derive such a method, we seek inspiration in the machine learning method called the stochastic gradient descent \cite{bottou.2018}. It is able to train deep neural nets with billions of samples and millions of decision variables \cite{Krizhevsky.2012}. Its main idea is to use the standard (batch) gradient descent method but instead of computing the gradient on the whole dataset, it computes it only for a small number of samples called the minibatch. The stochastic gradient descent has a direct connection to the coordinate descent method \cite{tseng.2001} where the gradient descent is computed with respect to a few decision variables instead of a few samples. There are several advantages of the stochastic gradient descent over its batch variant:
\begin{enumerate}\itemsep 0pt
\item The gradient computation is much faster and has much lower memory requirements. At the same time, it should not need significantly more iterations to approach the solution because the gradient with respect to any one given sample usually points towards the minimum during the early iterations.
\item Datasets often contain duplicate information or highly correlated samples. Since the batch gradient descent computes the gradients with respect to all samples, unnecessary computation is performed.
\item Due to the noisy gradients, stochastic gradient descent has a higher chance to escape local minima and stationary points for non-convex problems. This also makes it easier to handle nonsmooth functions.
\item Due to the noisy gradients, stochastic gradient descent does not overfit to the training data and generalizes better to unseen samples \cite{poggio.2017}.
\end{enumerate}
All these advantages are closely related to chance-constrained problems. The first two points are general and refer to a lower computational effort. Chance-constrained problems are non-convex even for linear data and when a joint chance constraint is converted into an individual one via the max operator, the constraint is also nonsmooth. Thus, the third point applies. The last point is relevant because for a large dimension of the random vector, sampling provides only a rather crude approximation of the true distribution and overfitting to the training data may decrease the solution quality.

\added{
The biggest disadvantage of the stochastic gradient descent is the possible lack of convergence. Even though there are multiple convergence results \cite{bollapragada2018adaptive,bottou.2018,Ghadimi2016}, they usually show that either a solution is only approached or they require that the gradient is computed from a progressively increasing number of samples. However, the stochastic gradient descent seems to work well in practice \cite{krizhevsky2012imagenet}.
}

Our paper is organized as follows. In Section \ref{sec:preliminaries}, we provide a brief introduction into the chance-constrained problems and into the stochastic gradient descent method. The stochastic gradient method requires that the objective is separable with respect to samples. However, the chance constraint combines all samples together. Thus, in Section \ref{sec:method} we provide an algorithm which handles this obstruction and we provide a computational complexity and comparison to the batch gradient descent in Section \ref{sec:complexity}. Finally, in Section \ref{sec:app1} we provide a brief description of three testing problems and show a good performance of our method in Section \ref{sec:app2}. We stress that we aimed at non-convex problems with a rather large number of scenarios.

\section{Preliminaries}\label{sec:preliminaries}

In this section, we introduce the chance-constrained problems and the stochastic gradient descent.

\subsection{Chance-constrained problems}

Chance-constrained problems are specific types of optimization problems where some constraints have to be satisfied only ``sufficiently'' often and may be violated in some cases. More formally, for a random vector $\xi$, we require that the random constraint $g(x,\xi)\le 0$ is satisfied with probability at least $1-\eps$ for some small $\eps$. With an objective function $f$ and a deterministic constraint set $X$, the chance-constrained problem may be written as
\begin{equation}\label{eq:problem1}
\aligned
\mnmz\ &\EE f(x,\xi)\\
\st\ &\PP(g(x,\xi)\le 0)\ge 1-\eps, \\
&x\in X.
\endaligned
\end{equation}
Here, $\EE$ refers to the expectation and $\PP$ to the probability. The chance constraint states that the $(1-\eps)$-quantile of $g(x,\cdot)$ is at most $0$. Defining the $(1-\eps)$-quantile function formally by
\begin{equation}\label{eq:defin_q}
q(x) := \inf \left \{t|\ \PP(g(x,\xi)\le t)\ge 1-\eps\right\},
\end{equation}
then the chance constraint is equivalent to $q(x)\le 0$. Then problem \eqref{eq:problem1} amounts to
\begin{equation}\label{eq:problem2}
\aligned
\mnmz\ &\EE f(x,\xi)\\
\st\ &q(x)\le 0, \\
&x\in X.
\endaligned
\end{equation}
Lemma \ref{lemma:q} in Appendix \ref{app:q} shows that $q$ is a Lipschitz continuous function under mild conditions. This allows us to solve \eqref{eq:problem2} by the (stochastic) (sub)gradient descent. To this aim, the constraints have to be in a simple form so that we can compute the projection fast. Thus, we penalize the constraint on the quantile to obtain
\begin{equation}\label{eq:problem3}
\aligned
\mnmz\ &\EE f(x,\xi) + \lambda \phi(q(x)) \\
\st\ &x\in X,
\endaligned
\end{equation}
where $\lambda>0$ is the penalization parameter. \added{If the penalization function $\phi$ is non-exact, for example $\phi(z)=\frac12\max\{z,0\}^2$, then the solution \eqref{eq:problem3} provides only an approximation of \eqref{eq:problem2}. However, for increasing $\lambda$ solutions of \eqref{eq:problem3} converge to a solution of \eqref{eq:problem2} under mild conditions \cite{bazaraa2013nonlinear}. Note that we will solve \eqref{eq:problem3} by the projected stochastic gradient descent described in the next section.}

\subsection{Stochastic gradient descent}\label{sec:sgd}

In machine learning, the typical optimization problem takes form
\begin{equation}\label{eq:problem_sgd}
\mnmz\ \frac1S\sum_{i=1}^S h(x,\xi_i).
\end{equation}
Here, $x$ is a decision variable, $h$ is a loss function (usually a discrepancy between predictions and labels) and $\xi_i$ are individual samples. The same problem appears in stochastic optimization, where $\xi_i$ are scenarios (realizations of a random vector) and the goal is to minimize the expectation of $h$.

The simplest approach to solve \eqref{eq:problem_sgd} to apply the (batch) gradient descent, where at iteration $k$ we consider stepsize $\alpha^k>0$ set
\begin{equation}\label{eq:update_batch}
x^{k+1} := x^k - \alpha^k\frac1S\sum_{i=1}^S\nabla_x h(x,\xi_i).
\end{equation}
Since the number of samples $S$ is often large, computing the (batch) gradient in \eqref{eq:update_batch} is time-consuming. The usual strategy is to replace the batch gradient by a stochastic gradient, where we select a subset $I^k$ of $\{1,\dots,S\}$ and perform the update as
\begin{equation}\label{eq:update_sgd}
x^{k+1} := x^k - \alpha^k\frac{1}{\nrm{I^k}}\sum_{i\in I^k}\nabla_x h(x,\xi_i).
\end{equation}
The name stochastic gradient descent reflects the fact that the gradient update is performed only with respect to a stochastic subset of observations.

\section{Solving chance-constrained problems via stochastic gradient descent}\label{sec:method}

In this section, we propose a novel scalable method for solving the chance-constrained problem
\begin{equation}\label{eq:problem_orig}
\aligned
\mnmz\ &\EE f(x,\xi)\\
\st\ &\PP(g(x,\xi)\le 0)\ge 1-\eps, \\
&x\in X.
\endaligned
\end{equation}
We recall that we minimize the expectation of $f(x,\xi)$ while we prescribe the probability that the random constraint $g(x,\xi)\le 0$ is satisfied. Using the quantile functon $q$ defined in \eqref{eq:defin_q}, we may equivalently rewrite the chance constraint into $q(x)\le 0$. When this constraint is penalized a penalization parameter $\lambda$ and a penalization function $\phi$, we arrive at \eqref{eq:problem3}. If $\xi$ has a finite number of scenarios $\{\xi_1,\dots,\xi_S\}$, then this problem amounts to 
\begin{equation}\label{eq:problem}
\aligned
\mnmz\ &\frac1S \sum_{i=1}^S f(x,\xi_i) + \lambda \phi(q(x)) \\
\st\ &x\in X.
\endaligned
\end{equation}
Note that when we drive $\lambda$ to infinity, the solutions of \eqref{eq:problem} will converge (under a constraint qualification) to a solution of \eqref{eq:problem_orig}. Thus, we concentrate on solving \eqref{eq:problem}.

We intend to apply the stochastic gradient descent described in Section \ref{sec:sgd}. To this aim, the projection onto $X$ has to be simple and the objective function has to be separable as in \eqref{eq:problem_sgd}. This is true for the first part of our objective \eqref{eq:problem}. However, the quantile function $q$ in the second part of \eqref{eq:problem} combines all scenarios and thus, it is not separable. In this part, we will extend the stochastic gradient descent to handle this obstruction.

\subsection{How to compute derivatives?}\label{sec:derivative}

First, we will compute the derivative for $q$. Since there are finite number of scenarios, the quantile is realized at some scenario, see Lemma \ref{lemma:q} in Appendix \ref{app:q}. Formally, for every $x$, there exists some index $i(x)$ such that
\begin{equation}\label{eq:q_equality_1}
q(x) = g(x,\xi_{i(x)}).
\end{equation}
If this index is unique and $g(\cdot,\xi_{i(x)})$ is differentiable at $x$, then $q$ is differentiable at $x$ and its derivative equals to
$$
\nabla q(x) = \nabla_x g(x,\xi_{i(x)}).
$$
Then the derivative of the objective function \eqref{eq:problem} can be computed via the chain rule and equals to
\begin{equation}\label{eq:nabla_F}
\frac1S\sum_{i=1}^S\nabla_x f(x,\xi_i) + \lambda \phi'(q(x))\nabla_x g(x,\xi_{i(x)}).
\end{equation}
Note that the last part of the gradient depends only on one scenario $i(x)$. Having the gradient at hand, it is simple to write the gradient descent update
\begin{equation}\label{eq:update_batch_ccp}
\aligned
y^{k+1} &:= x^k - \alpha^k\left( \frac1S \sum_{i=1}^S\nabla_x f(x^k,\xi_i) + \lambda \phi'(q(x^k))\nabla_x g(x^k,\xi_{i(x)}) \right), \\
x^{k+1} &:= P_X(y^{k+1}),
\endaligned
\end{equation}
where $\alpha^k>0$ is the stepsize, $P_X$ is the projection onto the feasible set $X$ and $k$ denotes the iteration index. We summarize the (batch) gradient descent in Algorithm \ref{alg:batch}.

\begin{algorithmaux}[!ht]
\begin{algorithmic}[1]
\State \label{alg:batch1}Set index $k\gets 0$
\State \label{alg:batch2}Initialize variable $x^0$
\While{\textbf{not }termination criterion}\label{alg:batch3}
\State \label{alg:batch4}Compute $g_i^k\gets g(x^k,\xi_i)$ for all $i\in \{1,\dots,S\}$ \Comment{Update $g$ }
\State \label{alg:batch5}Find quantile $q^k$ of $\{g_i^k\}_{i=1}^S$ and the index $i(x^k)$ from \eqref{eq:q_equality_1} realizing the quantile \Comment{Find quantile}
\State \label{alg:batch6}Update $x^k$ according to \eqref{eq:update_batch_ccp} \Comment{Update $x$}
\State \label{alg:batch7}Increase $k$ by one
\EndWhile
\end{algorithmic}
\caption{Batch gradient descent for solving \eqref{eq:problem}}
\label{alg:batch}
\end{algorithmaux}

Unfortunately, update \eqref{eq:update_batch_ccp} requires the quantile $q(x^k)$, which in turn needs the evaluation of $g(x,\xi_i)$ for all scenarios $i\in\{1,\dots,S\}$. Since this is a costly update, we will suggest a new update which evaluates $g$ only on a (small) number of samples $I^k$ called the minibatch. The main idea is to use auxiliary variables $z_i^k$ which approximate $g(x^k,\xi_i)$ and to consecutively update this approximation by setting
\begin{equation}\label{eq:update_z}
z_i^k := \begin{cases} g(x^k,\xi_i) &\text{if }i\in I^k, \\ z_{i}^{k-1} &\text{otherwise.} \end{cases}
\end{equation}
Since $k$ is the iteration index, $z_i^k$ contains the evaluation of $g(\cdot,\xi_i)$ for some (possibly delayed) value of $x$.

Then we compute the approximation $q^k$ of the quantile $q(x^k)$ defined in \eqref{eq:defin_q} as a quantile of $\{z_i^k\}_{i=1}^S$, which amounts to solving
\begin{equation}\label{eq:update_q}
q^k := \inf \left\{t\mid\ \frac1S \sum_{i=1}^S \chi(z_i^k \le t) \ge 1-\eps\right\}.
\end{equation}
Here, $\chi$ is the characteristic ($0$-$1$) function checking if $z_i^k \le t$ is satisfied. Similarly to \eqref{eq:q_equality_1}, there is some $i^k$ such that
\begin{equation}\label{eq:update_i}
q^k = z_{i^k}^k.
\end{equation}
Then we can approximate the gradient in \eqref{eq:nabla_F} by
\begin{equation}\label{eq:update_nabla}
\frac{1}{\nrm{I^k}}\sum_{i\in I^k}\nabla_x f(x^k,\xi_i) + \lambda \phi'(q^k)\nabla_x g(x^k,\xi_{i^k}).
\end{equation}
and the next iterate in \eqref{eq:update_batch_ccp} by
\begin{equation}\label{eq:update_x}
\aligned
y^{k+1} &:= x^k - \alpha^k\left( \frac{1}{\nrm{I^k}}\sum_{i\in I^k}\nabla_x f(x^k,\xi_i) + \lambda \phi'(q^k)\nabla_x g(x^k,\xi_{i^k}) \right), \\
x^{k+1} &:= P_X(y^{k+1}).
\endaligned
\end{equation}

There are three differences between \eqref{eq:update_batch_ccp} and \eqref{eq:update_x}. First, the expectation of the gradient of $f$ with respect to all samples is replaced by its expectation with respect to $I^k$. Second, the derivative of the penalization function at the exact quantile $\phi'(q(x^k))$ is replaced by its derivative at the approximative quantile $\phi'(q^k)$. Third, index $i(x^k)$ satisfying \eqref{eq:q_equality_1} is replaced by $i^k$ satisfying \eqref{eq:update_i}. Naturally, this makes the update \eqref{eq:update_x} inexact. However, its main strength lies in the fact that $g$ is evaluated only at the active minibatch $I^k$ in \eqref{eq:update_z} and at one index $i^k$ in \eqref{eq:update_nabla}. Thus, $g$ is computed only at a small number of indices and thus update \eqref{eq:update_x} is much faster than \eqref{eq:update_batch_ccp}, especially if the computation of $g$ is difficult. 

\added{
In the analysis above, we required that $i^k$ is unique and $g(\cdot,\xi_{i^k})$ is differentiable at $x^k$. If the former is not the case, then we replace the gradient in \eqref{eq:update_x} by a subgradient. If the latter is not the case, then we select any $i^k$ satisfying \eqref{eq:update_i} at random. Note that in the numerical experiments, the index $i^k$ was always unique. This conforms with the fact that Lipschitz continuous functions are differentiable almost everywhere due to the Rademacher's theorem.
}

\subsection{Numerical implementation}\label{sec:algorithm}

\added{
We summarize the ideas from the previous section into Algorithm \ref{alg:sgd}. In initialization steps \ref{alg:step_init1}-\ref{alg:step_init4} we choose an initial point $x^1$ and compute $z^0$ with components $z_i^0=g(x^1,\xi_i)$. In step \ref{alg:step_perm1} we find a random permutation of all indices which will be the basis for the minibatch selection. For simplicity we assume that every minibatch contains $\smin$ samples, thus $\{1,\dots,S\}$ can be split into
\begin{equation}\label{eq:defin_nmin}
\nmin := \left\lfloor \frac{S}{\smin}\right\rfloor
\end{equation}
minibatches. In step \ref{alg:step_loop2} we perform a loop over these minibatches. In step \ref{alg:step_update1} we get the minibatch $I^k$ based on the random permutation $\theta$ and we update $g(x^k,\xi_i)$ on this minibatch in step \ref{alg:step_update2}. Step \ref{alg:step_update3} updates $z_i^k$ and the next step finds the quantile $q^k$ and the corresponding index $i^k$. Finally, we update $x^k$ in step \ref{alg:step_update6}, increase $k$ and reiterate. There is a more efficient way of computating $q^k$. However, since it is rather technical and the only benefit is a faster computation, we postpone it to Appendix \ref{app:quantile}.

\begin{algorithmaux}[!ht]
\begin{algorithmic}[1]
\State \label{alg:step_init1}Initialize variable $x^1$
\State \label{alg:step_init2}Set $z_i^0=g(x^1,\xi_i)$ for all $i=1,\dots,S$
\State \label{alg:step_init4}Set index $k\gets 1$
\While{\textbf{not }termination criterion}\label{alg:step_loop1}\Comment{Epoch index}
\State \label{alg:step_perm1}Get a random permutation $\theta$ of $\{1,\dots,S\}$\Comment{Randomly shuffle scenarios}
\For{$j=1,\dots, \nmin$}\label{alg:step_loop2}\Comment{Loop within an epoch}
\State \label{alg:step_update1}$I^k\gets \cup\{\theta(i)|\ i\in[(j-1)\smin+1, j\smin]\}$\Comment{Determine minibatch}
\State \label{alg:step_update2}Compute $g_i^k\gets g(x^k,\xi_i)$ for all $i\in I^k$ \Comment{Update $g$ on minibatch}
\State \label{alg:step_update3}Update $z_i^k$ based on \eqref{eq:update_z}\Comment{Update $z$}
\State \label{alg:step_update4}Find quantile $q^k$ of $\{z_i^k\}_{i=1}^S$ and the index $i^k$ from \eqref{eq:update_i} realizing the quantile\Comment{Find quantile}
\State \label{alg:step_update6}Update $x^k$ according to \eqref{eq:update_x} \Comment{Update $x$}
\State \label{alg:step_update7}Increase $k$ by one
\EndFor
\EndWhile
\end{algorithmic}
\caption{Stochastic gradient descent for solving \eqref{eq:problem}}
\label{alg:sgd}
\end{algorithmaux}
}
The procedure described in Algorithm \ref{alg:sgd} solves problem \eqref{eq:problem} for one fixed $\lambda$. When we are satisfied with the current solution, we increase $\lambda$ and use the terminal value from the previous $\lambda$ as the starting value for the next $\lambda$. This provides a solution to the chance-constrained problem \eqref{eq:problem_orig} and the procedure is summarized in Algorithm \ref{alg:lambda}.

\begin{algorithm}[!ht]
\begin{algorithmic}[1]
\For{$\lambda^1<\lambda^2<\dots<\lambda^L$}
\State Employ Algorithm \ref{alg:sgd} with starting value $x^{l-1}$ and $\lambda=\lambda^l$ to get $x^l$
\EndFor
\end{algorithmic}
\caption{For solving the chance-constrained problem \eqref{eq:problem_orig}}
\label{alg:lambda}
\end{algorithm}

\subsection{Convergence analysis}

I\added{n this section, we discuss the convergence of our algorithm. There are multiple convergence results for the stochastic gradient descent \cite{bollapragada2018adaptive,bottou.2018,Ghadimi2016}; however, we are not aware of any which handles our case of a biased gradient estimate and a non-differentiable non-convex function $q$. Even though we do not provide a formal proof, we give an explanation of why we will observe convergence in the numerical experiments. We start with the following lemma whose proof we postpone to Appendix \ref{app:q}.

\begin{lemma}\label{lemma:q_conv}
Let $X$ be a compact convex set, $f$ and $g(\cdot,\xi_i)$ be Lipschitz continuous functions for all $i$ and let $\phi(z) = \frac12\max\{0,z\}^2$. Moreover, assume that $\alpha^k\to 0$ or that $\{x^k\}$ is convergent. Then $\nrm{q(x^k)-q^k}\to 0$.
\end{lemma}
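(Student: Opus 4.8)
The plan is to control $|q(x^k)-q^k|$ by the worst-case staleness of the auxiliary variables $z_i^k$ and then show that this staleness vanishes. The starting observation is that $q(x^k)$ and $q^k$ are the \emph{same} order statistic — the $\lceil(1-\eps)S\rceil$-th smallest element — of the arrays $\{g(x^k,\xi_i)\}_{i=1}^S$ and $\{z_i^k\}_{i=1}^S$ respectively. Since the $m$-th order statistic is a non-expansive ($1$-Lipschitz) function of its arguments in the sup-norm, I would first establish
$$
|q(x^k)-q^k|\le \max_i |g(x^k,\xi_i)-z_i^k|.
$$

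Next, by the update rule \eqref{eq:update_z}, each $z_i^k$ equals $g(x^{\tau_i(k)},\xi_i)$, where $\tau_i(k)\le k$ is the last iteration at which index $i$ entered the minibatch. Using the Lipschitz continuity of $g(\cdot,\xi_i)$ with constant $L$, this gives
$$
|g(x^k,\xi_i)-z_i^k| = |g(x^k,\xi_i)-g(x^{\tau_i(k)},\xi_i)| \le L\,\|x^k-x^{\tau_i(k)}\|,
$$
so it remains to prove $\|x^k-x^{\tau_i(k)}\|\to 0$ uniformly in $i$. Two ingredients handle this. First, the epoch structure of Algorithm \ref{alg:sgd} bounds the staleness: within each epoch the random permutation visits (almost) every index, so the delay $k-\tau_i(k)$ is bounded by a constant $D$ of order $\nmin$. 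Second, I would bound the per-step movement. Because $X$ is compact and $f,g(\cdot,\xi_i)$ are Lipschitz, their (sub)gradients are bounded on $X$; moreover $\phi'(q^k)=\max\{0,q^k\}$ is bounded, since the $z_i^k$ are values of $g$ evaluated at points of the compact set $X$. Hence the search direction in \eqref{eq:update_x} admits a uniform bound $M$, and non-expansiveness of $P_X$ (together with $x^k=P_X(x^k)$) yields $\|x^{k+1}-x^k\|\le \alpha^k M$.

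Telescoping over the at most $D$ steps between $\tau_i(k)$ and $k$ then gives
$$
\|x^k-x^{\tau_i(k)}\| \le M\sum_{l=\tau_i(k)}^{k-1}\alpha^l \le MD\max_{k-D\le l<k}\alpha^l .
$$
If $\alpha^k\to 0$ the right-hand side vanishes; if instead $\{x^k\}$ converges, then $\|x^k-x^{\tau_i(k)}\|\to 0$ directly, because the sequence is Cauchy and the bounded delay forces $\tau_i(k)\to\infty$. Combining the three displays yields $|q(x^k)-q^k|\le LMD\max_{k-D\le l<k}\alpha^l\to 0$ (or $\to 0$ via the Cauchy argument), which is the claim.

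I expect the main obstacle to be the staleness bound itself. The floor in $\nmin=\lfloor S/\smin\rfloor$ means up to $\smin-1$ indices are dropped each epoch, and since the permutation is redrawn every epoch, a fixed index could in principle be skipped for several consecutive epochs; thus $k-\tau_i(k)$ is only bounded almost surely (or in expectation), not deterministically. I would either assume $\smin$ divides $S$, so every index is refreshed exactly once per epoch and $D=\nmin$ is deterministic, or argue that almost surely each index is selected infinitely often with delays bounded along the relevant subsequence, which already suffices for the stated convergence. The non-expansiveness of the order statistic and the boundedness of the (sub)gradients are routine once compactness and Lipschitz continuity are in hand.
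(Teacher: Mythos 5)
Your proposal is correct and takes essentially the same route as the paper's own proof: bound the per-step movement $\norm{x^{k+1}-x^k}\le\alpha^k(L_f+\lambda B_gL_g)$ via non-expansiveness of $P_X$ and boundedness of $\phi'(q^k)$ on the compact set $X$, bound the staleness of $z_i^k$ through the epoch structure (the paper asserts the deterministic delay bound $z_i^k=g(x^{k-j},\xi_i)$ with $j\le 2\nmin-1$), telescope, and pass to the quantiles. If anything, your write-up is more careful in two spots where the paper is terse: you make explicit the $1$-Lipschitz property of the order statistic in the sup-norm (the paper only says ``due to the definition of the quantile''), and you correctly flag that the floor in $\nmin=\lfloor S/\smin\rfloor$ means the paper's deterministic delay bound really requires $\smin$ to divide $S$ (or an almost-sure argument), a point the paper's proof silently glosses over.
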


In the rest of this section, we assume that the assumptions of Lemma \ref{lemma:q_conv} are satisfied. We recall that the true \eqref{eq:nabla_F} and estimated \eqref{eq:update_nabla} gradients equal to
$$
\aligned
h(x^k) &= \frac1S\sum_{i=1}^S\nabla_x f(x^k,\xi_i) + \lambda \phi'(q(x^k))\nabla_x g(x^k,\xi_{i(x^k)}) \\
\hat h(x^k) &= \frac{1}{\nrm{I^k}}\sum_{i\in I^k}\nabla_x f(x^k,\xi_i) + \lambda \phi'(q^k)\nabla_x g(x^k,\xi_{i^k}).
\endaligned
$$
The usual requirement for convergence proofs is that $\EE\hat h(x)$ is close to $h(x)$, either in a direction or in a norm \cite{bottou.2018}. The most common case is the requirement of $\EE\hat h(x)=h(x)$, thus $\hat h(x)$ needs to be an unbiased estimate of $h(x)$. We immediately observe that this holds true for the first part of $\hat h(x)$, namely we have
$$
\EE\left(\frac{1}{\nrm{I^k}}\sum_{i\in I^k}\nabla_x f(x^k,\xi_i)\right) = \frac1S\sum_{i=1}^S\nabla_x f(x,\xi_i),
$$
where the expectation is taken with respect to uniform sampling of the minibatch $I^k$. For the second part we due to Lemma \ref{lemma:q_conv} observe that
\begin{equation}\label{eq:phi_diff}
\nrm{\phi'(q(x^k)) - \phi'(q^k)}=\nrm{\max\{q(x^k),0\} - \max\{q^k,0\}}\le \nrm{q(x^k)-q^k}\to 0.
\end{equation}
It remains to estimate the last term $\norm{\nabla_x g(x^k,\xi_{i(x^k)}) - \nabla_x g(x^k,\xi_{i^k})}$.

Recall that $i(x^k)$ is the index of the quantile of $\{g(x^k,\xi_i)\}_{i=1}^S$ and similarly $i^k$ is the index of the quantile of $\{z_i^k\}_{i=1}^S$. From the proof of Lemma \ref{lemma:q_conv} we observe that $\nrm{g(x^k,\xi_i)-z_i^k}\to 0$ for all $i$. Consider now the case when $i(x^k)$ is unique. If $k$ is sufficiently large (and if the spread of $g(x^k,\xi_i)$ around $g(x^k,\xi_{i(x^k)})$ is sufficiently large), then $i(x^k)=i^k$. This due to \eqref{eq:phi_diff} implies that
$$
\aligned
\nrm{\lambda \phi'(q(x^k))\nabla_x &g(x^k,\xi_{i(x^k)}) - \lambda \phi'(q^k)\nabla_x g(x^k,\xi_{i^k})} \\
&= \lambda\nrm{(\phi'(q(x^k))-\phi'(q^k))\nabla_x g(x^k,\xi_{i(x^k)})} \\
&\le \lambda\nrm{(\phi'(q(x^k))-\phi'(q^k))}\norm{\nabla_x g(x^k,\xi_{i(x^k)})} \to 0.
\endaligned
$$
Thus, the second part of $\hat h(x)$ is close to the second part $h(x)$. If $i(x^k) $ is not unique, then Remark \ref{remark:indices} from Appendix \ref{app:q} suggests that at least in some cases, $q$ is still differetiable at $x^k$ and the gradient equals to a convex hull of $\nabla_x g(x,\xi_i)$ where the convex hull is taken with respect to $i$ which coincide with $i(x^k)$. Then selecting an index $i(x^k)$ at random provides a good approximation of the gradient.

To summarize, both the estimate $\hat h(x^k)$ and the true gradient $h(x^k)$ consist of two parts. The expectation of the difference of the first parts is zero. The difference of the second parts goes to zero whenever the index $i^k$ is unique. Since these are the two core properties used in convergence proofs of the stochastic gradient descent, and since $i^k$ was always unique in the numerical experiments, we believe that this is the explanation why we observed convergence in the numerical section.
}

\section{Complexity analysis and comparison with the batch algorithm}\label{sec:complexity}

In this section, we provide a complexity analysis of Algorithm \ref{alg:sgd} and we show the benefits of our algorithm over the standard batch gradient from Algorithm \ref{alg:batch}. In machine learning, the crucial term is the epoch. We provide the definition now.

\begin{definition}\label{def:epoch}
One epoch is the time when $g$ is evaluated $S$ times.
\end{definition}

\noindent In other words, the epoch is the time during which $g$ looks at the whole dataset and evaluates each scenario for some $x$. Since the evaluation of $g$ is often the most demanding computation, epoch acts as an indicator for the computational time.

In the batch algorithm from Algorithm \ref{alg:batch}, one epoch equals to one \texttt{while} loop in step \ref{alg:batch3}. Denoting $g$ the complexity of computing $g(\cdot,\xi_i)$ for one sample $i$, then the complexity of step \ref{alg:batch4} is $O(gS)$. For step \ref{alg:batch5} there are algorithms \cite{knuth.1997} for finding the quantile with complexity $O(S)$. In total, the complexity is $O(gS)$ and during one epoch we will perform one update of $x$.

\added{
In the stochastic gradient descent from Algorithm \ref{alg:sgd}, the epoch consists of one \texttt{while} loop in step \ref{alg:step_loop1}. The permutation in step \ref{alg:step_perm1} can be found via the Fisher-Yates shuffle \cite{knuth.1997} with complexity $O(S)$. Computing $g(x^k,\xi_i)$ on the minibatch $I^k$ in step \ref{alg:step_update2} has complexity $O(g\smin)$. The quantile in step \ref{alg:step_update4} can be found in $O(S)$. All other steps are negligible. Thus, the complexity of the inner loop in step \ref{alg:step_loop2} amounts to
$$
O(g\smin + S)
$$
Since for one epoch we perform this update $\nmin$ times, due to \eqref{eq:defin_nmin} the complexity of one epoch equals to
$$
O(gS + \nmin S).
$$
During one epoch, $\nmin$ updates of $x$ are performed. 
}

\begin{table}[!ht]
\centering
\caption{Comparison of the complexity of the batch and stochastic gradient descent methods for solving the (penalization of) chance-constrained problem \eqref{eq:problem}.}
\label{tab:complexity}
\begin{tabular}{lll}
\toprule
& Complexity & Updates of $x$ \\
Batch gradient descent & $O(gS)$ & $1$ \\
Stochastic gradient descent & $O(gS + \nmin S)$ & $\nmin$ \\ \midrule
Batch gradient descent & $O\big(g\sqrt{S}S\big)$ & $\sqrt{S}$ \\
Stochastic gradient descent & $O\big((g+\sqrt{S})S\big)$ & $\sqrt{S}$ \\
\bottomrule
\end{tabular}
\end{table}

We summarize the complexity in Table \ref{tab:complexity}. The first two rows follow directly from the discussion above. For a simple comparison, we choose $\smin=\nmin=\sqrt{S}$, for which the complexity for $\sqrt S$ epochs reduces to $O(g\sqrt S S)$ and $O((g + \sqrt S)S)$, respectively. Thus, we see that the stochastic gradient descent provides a clear benefit whenever the computation of $g$ is time-consuming. 

\section{Applications: Description}\label{sec:app1}

In this section, we describe three applications to test the performance of our Algorithm \ref{alg:lambda}. The numerical results are postponed to Section \ref{sec:app2}.

\subsection{Application 1: Optimal control of fish population}\label{sec:app1_1}

This application is adapted from \cite[Chapter 8]{lenhart.2007}, where the authors provided the optimal fishing strategy for a deterministic fishery model without a terminal condition. The model assumes the logistic population evolution
$$
\dot x = rx\left(1-\frac xK\right),
$$
where $x(t)$ is the number of fish, $r$ is the growth rate, $K$ is the carrying capacity and the initial condition $x(0)=x_0$ is satisfied. We introduce control variable $u(t)$ which measures the fishing rate. Then model changes into
$$
\dot x = rx\left(1-\frac xK\right) - ux.
$$
Discretization this ODE via the forward Euler scheme with time step $\Delta t$ leads to
\begin{equation}\label{eq:app1_par1}
x_{t+1} = x_t + \Delta t\left(rx_t - \frac 1Krx_t^2 - u_tx_t\right).
\end{equation}
The profit from fishing may be written as
$$
\Delta t\sum_{t=0}^{T-1}\left( p_tu_tx_t - d_tu_t^2x_t^2 - c_tu_t \right)
$$
where $\Delta t$ is the time step, $p$ is the fish price, $d$ the cost of diminishing returns and $c$ the fishing costs. Putting this all together, we obtain an optimization problem
\begin{equation}\label{eq:app1_problem_det1}
\aligned
\mxmz\ &\Delta t\sum_{t=0}^{T-1}\left( p_tu_tx_t - d_tu_t^2x_t^2 - c_tu_t \right)
 \\
\st\ &\text{system }\eqref{eq:app1_par1}\text{ holds true},\\
&u(t) \in [0,\umax],
\endaligned
\end{equation}
where $\umax$ is the maximal fishing rate.

The solution of this problem will likely result in an empty fishery, which is not optimal for a long-term development. Thus, we add the constraint $x_T\ge \xdes$. We do not need to consider the non-negativity constraint $x_t\ge 0$. Indeed, if $x$ is negative, then the solution may be improved by considering $u=0$. Since the carrying capacity $K$, the growth rate $r$, the initial state $x_0$ and the time-dependent future prices $p,d,c$ are not known precisely, it makes sense to consider them as stochastic variables. Then $x$ becomes a stochastic variable as well and the deterministic constraint $x_T\ge\xdes$ needs to be changed into a stochastic one. This leads to the problem
\begin{equation}\label{eq:app1_problem}
\aligned
\mxmz\ &\Delta t\sum_{t=0}^{T-1}\EE\left( p_tu_tx_t - d_tu_t^2x_t^2 - c_tu_t \right)\\
\st\ &\text{system }\eqref{eq:app1_par1}\text{ holds true},\\
&\PP(x_T \ge \xdes)\ge 1-\eps, \\
&u_t \in [0,\umax].
\endaligned
\end{equation}
For every $u$, we are able to compute $x$ in a unique manner. Thus, the decision variable is only $u$ and $x(u)$ can be considered as the state variable. Then, problem \eqref{eq:app1_problem} fits into setting \eqref{eq:problem_orig} and we may apply Algorithm \ref{alg:lambda} to solve it. The derivates are computed via the backpropagation technique described in Appendix \ref{app:derivatives}. We would like to note that since $x_t$ does not depend on $p_t$, these random variables are independent. Thus, we have $\EE p_tu_tx_t = u_t\EE p_t\EE x_t$ and the prices $p_t$, $d_t$ and $c_t$ may be replaced by their expectations. Here, we keep the original more complex problem \eqref{eq:app1_problem} to simulate the situation where our algorithm is applied in a brute force manner. We comment more on this in Appendix \ref{app:diff}.

\subsection{Application 2: Optimal control of electrostatic separator}\label{sec:app1_2}

Recently, a great emphasis has been put on the circular economy where resources are reused instead of being discarded. One of the major problems is the abundance of plastics, for example, the packaging is often discarded immediately after being used \cite{ragaert.2017}. Since there are multiple types of plastics, the waste stream usually does not contain only one type but their mixture. Since each plastic has a different recycling procedure, a necessary preliminary step before recycling is their separation \cite{doetterl.2016}. 

One of the separation possibilities is the electrostatic free-fall separator \cite{wu.2013} depicted in Figure \ref{fig:separator2}. Two types of particles are charged with opposite polarities, placed into the feeder and then dropped into the separator with an electrostatic field generated by two parallel electrodes. Due to the gravity, the particles fall downwards and due to the electrostatic field and opposite polarities, one of the plastic types falls leftwards while the other one rightwards. Thus, the particles separate.

There were several attempts to optimize the shape of the free-fall separator \cite{mach.2014}. Here, we consider the simplified version with parallel electrodes. We consider three forces acting on the particles: the gravitational force $F_g=mg$, the Coulomb force $F_c = \frac{QU}{d_r-d_l}$ and the air drag $F_a=\frac 12 CS\rho cv^2$. Here, $m$ is the mass of the particle, $Q$ its charge, $U$ the voltage, $d_r-d_l$ the distance of the electrodes, $C$ the drag coefficient, $S$ the particle cross-section, $\rho$ the density of air and $v$ the particle speed. Since the electrodes are assumed to be parallel, the Coulomb force $F_c$ reduces to the simple form above. The equations for particle position $s$ and velocity $v$ in components $(x,y)$ equal to
\begin{equation}\label{eq:app2_par1}
\aligned
\dot s_x &= v_x, \\
\dot s_y &= v_y, \\
\dot v_x &= \frac{QU}{m(d_r-d_l)} - \frac1{2m}CS\rho v_x\sqrt{v_x^2+v_y^2}, \\
\dot v_y &= g - \frac1{2m}CS\rho v_y\sqrt{v_x^2+v_y^2}.
\endaligned
\end{equation}
We add the impact of particles on the electrodes rather informally by writing
\begin{equation}\label{eq:app2_par2}
v_x\text{ changes sign if }s_x=d_l\text{ or }s_x=d_r.
\end{equation}
Due to this constraint, the system is non-differentiable. As the control variables we consider the voltage $U$, the position of the left electrode $d_l$ and the position of the right electrode $d_r$. The random variables include the charge $Q$, the mass $m$ and the initial position $(s_x,s_y)$ and the initial velocity $(v_x,v_y)$.

Since the cost of the electrostatic separator is proportional to the voltage on the electrodes, we want to minimize it. At the same time, we want to achieve a high-quality separation. Since the positively charged particles are supposed to fly right, we expect $s_{x,{\rm pos}}\ge\xdes$, where $\xdes$ is some positive desired state. Similarly, the negatively charged particles are supposed to fly left and thus, we want them to satisfy $s_{x,{\rm neg}}\le-\xdes$. This gives rise to the following problem
\begin{equation}\label{eq:app2_problem}
\aligned
\mnmz\ &U \\
\st\ &\text{system }\eqref{eq:app2_par1}\text{ and }\eqref{eq:app2_par2}\text{ holds true}, \\
&\PP(s_{x,{\rm pos}}(T) \ge \xdes,\ s_{x,{\rm neg}}(T) \le -\xdes)\ge 1-\eps, \\
&U\in [\Umin,\Umax],\ d_l\in[-\dmax,-\dmin],\ d_r\in[\dmin,\dmax] .
\endaligned
\end{equation}
Similarly to the previous application, we can consider the decision variables as only $U$, $d_l$ and $d_r$ while considering $s_x$, $s_y$, $v_x$ and $v_y$ as the dependent state variables. The ODE is again discretized via the forward Euler scheme and the derivatives are computed via the backpropagation technique described in Appendix \ref{app:backprop}.

\subsection{Application 3: Optimal design of gas network}\label{sec:app1_3}

We follow the gas network described in \cite{gotzes.heitsch.henrion.schultz.2016} by an injection node $0$, withdrawal nodes $\{1,\dots,n\}$ and a set of pipes (edges) with their pressure drop coefficients $\Phi_e$. For each node $i$, there is a stochastic demand $\xi_i$ which is assumed to follow a known distribution. The goal in \cite{adam.branda.heitsch.henrion.2018} was to design the network such that the demand is satisfied with a high probability. This was specified by controlling the upper-pressure bounds  $p_i^{\rm max}$ while the lower pressure bounds $p_i^{\rm min}$ were normalized to one.

For tree-structured networks without cycles, the authors in \cite{gotzes.heitsch.henrion.schultz.2016} showed that a random demand $\xi$ can be satisfied if and only if
\begin{equation}\label{eq:app3_g}
\aligned
(p_0^{\rm min})^2 &\leq (p_i^{\rm max})^2 + h_i(\xi),\ i = 1, \dots, n,\\
(p_0^{\rm max})^2 &\geq (p_i^{\rm min})^2 + h_i(\xi),\ i = 1, \dots, n,\\
(p_i^{\rm max})^2 + h_i(\xi) &\geq (p_j^{\rm min})^2 + h_j(\xi),\ i,j = 1, \dots, n.
\endaligned
\end{equation}
Here, functions $h_i(\xi)$ can be computed by
$$
h_i(\xi) = \sum_{e \in \Pi(i)} \Phi_e \left( \sum_{j \succeq \pi(e)} \xi_j \right)^{2},
$$
where $\Pi(i)$ denotes the unique directed path (edges) from the root node $0$ to node $i$, $\pi(e)$ is the end node of edge $e$ and $j \succeq i$ means that the unique path from root to $j$ passes through $i$.

There are many ways of defining the objective. The simplest way is to minimize the total upper pressure bounds, which results in
\begin{equation}\label{eq:app3_problem}
\aligned
\mnmz\ &\sum_{i=1}^n p_i^{\rm max}\\
\st\ &\mathbb P(\text{system \eqref{eq:app3_g} is fulfilled}) \ge 1-\varepsilon, \\
&p_i^{\rm max}\ge 1.
\endaligned
\end{equation}
To apply our algorithm, we need to have only one chance constraint. Since \eqref{eq:app3_g} contains multiple constraints, we employ the standard trick of passing to the maximum. This gives rise to the combined single constraint
$$
\max_{i=1,\dots,n}\left((p_i^{\rm max})^2 - \tilde h_i(\xi) \right) \ge 0,
$$
where $\tilde h_i(\xi)$ combines the values of $h_j(\xi)$ and $p_j^{\rm min}$. Note that this is a non-convex and a nonsmooth constraint. However, as mentioned in the introduction, this should not be a (big) hurdle for our algorithm due to its stochastic nature.

\section{Applications: Numerical results}\label{sec:app2}

In this section, we apply our Algorithm \ref{alg:lambda} to the chance-constrained problems described in Section \ref{sec:app1}. We summarize these applications in Table \ref{tab:data1}. Note that in all cases we considered a rather large number of $S=100{,}000$ scenarios. Moreover, two of these applications contains an ODE in the constraints, one is nonsmooth and one contains joint chance constraints. In all three applications, the function $g(\cdot,\xi)$ is non-convex.

\begin{table}[!ht]
\centering
\caption{Summary of the used problems: number of variables $n$, number of scenarios $S$, dimension of the random vector $\xi$, type of chance constraints, type of algebraic constraints and special features.}
\label{tab:data1}
\begin{tabular}{@{}llllllll@{}}
\toprule
& Label & $n$ & $S$ & $\operatorname{dim}\xi$ & CCP type & Constraints & Speciality \\
Fishing & \eqref{eq:app1_problem} & $1000$ & $100{,}000$ & $3+3n$ & Individual & Box & ODE \\
Separator & \eqref{eq:app2_problem} & $3$ & $100{,}000$ & $4$ & Individual & Box & ODEs, Nonsmooth \\
Gas network & \eqref{eq:app3_problem} & $12$ & $100{,}000$ & $n$ & Joint & Box & - \\
\bottomrule
\end{tabular}
\end{table}

In Table \ref{tab:data2} we summarize the used parameters. If multiple values were used, they are separated by a slash. Parameters specific for individual applications are described in sections dedicated to individual applications. We have already mentioned that the criterion for the computational complexity is the number of epochs from Definition \ref{def:epoch}. The standard criterion for the algorithm progress is the number of updates of the decision variable. Since during one epoch we perform $\frac{S}{\smin}$ updates, this is equal to
\begin{equation}\label{eq:defin_nupdate}
\nupdate = S\frac{\nepoch}{\smin}.
\end{equation}
In other words, if $S$ is fixed, the progress of Algorithm \ref{alg:lambda} should be constant (for sufficiently large minibatches) in $\frac{\nepoch}{\smin}$.

\begin{table}[!ht]
\centering
\caption{Summary of the used parameters: allowed failure level $\eps$, size of the minibatch $\smin$, number of epochs $\nepoch$, the stepsize $\alpha$ and the initial penalization parameter $\lambda^1$.}
\label{tab:data2}
\begin{tabular}{@{}llllll@{}}
\toprule
& $\eps$ & $\smin$ & $\nepoch$ & $\alpha$ & $\lambda^1$\\
Fishing & $0.2$ & $1000/100$ & $10/1$ & $10^{-2}$ & $10^1$ \\
Separator & $0.1$ & $1000/100/10$ & $20$ & $10^{-4}$ & $10^3$ \\
Gas network & $0.15$ & $2000/1000/500/250/100$ & $40/20/10/5/2$ & $10^{-4}$ & $10^{-3}$ \\
\bottomrule
\end{tabular}
\end{table}

All codes were implemented in Matlab. The only exception was the merging Algorithm \ref{alg:sort} where we spent hours of cursing and thousands of tears before we finally managed to implement it in C.

\subsection{Application 1: Optimal control of fish population}\label{sec:app2_1}

For the fishing application, the prices $p,d,c$ follow a time-dependent multivariate random walk. More precise generation process and other random variables are described in Appendix \ref{app:app1}. The time interval was chosen as $[0,10]$, which we discretized into $n=100$ and $n=1000$ time steps. Note that this number also equals to the number of decision variables. We repeated the experiment with $(\smin,\nepoch)=(100,1)$ and $(\smin,\nepoch)=(1000,10)$. Other parameters were chosen as described in Table \ref{tab:data2}.

We compare the total time for all values of the penalization parameter $\lambda$ in Table \ref{tab:app1_res1}. First, the total time is relatively small, for example for a problem with $n=1000$ decision variables and $S=100000$ scenarios, we need only $106$ seconds to solve the problem. Note that evaluations of the objective $f$ and the constraints $g$, which corresponds to solving the ODE, is the most time-consuming computation while the time to find the quantile is small.

\begin{table}[!ht]
\caption{Summary of the needed time for the fishing application from Section \ref{sec:app1_1}: Total time, time to evaluate the objective $f$ and the constraints $g$ and the time to find the quantile.}
\label{tab:app1_res1}
\centering
\begin{tabular}{@{}lllllllll@{}}
\toprule
\multicolumn{5}{c}{} & \multicolumn{4}{c}{Time [s]} \\\cmidrule{6-9}
$n$ & $S$ & $\smin$ & $\nepoch$ & $n_{\rm updates}$ & Total & Eval $f$ & Eval $g$ & Quantile \\
\midrule
$100$ & $100000$ & $100$ & $1$ & $1000$ & $17.1$ & $4.4$ & $2.0$ & $7.9$ \\
$100$ & $100000$ & $1000$ & $10$ & $1000$ & $100.2$ & $49.5$ & $24.0$ & $8.0$ \\
$1000$ & $100000$ & $100$ & $1$ & $1000$ & $106.1$ & $53.5$ & $7.2$ & $7.0$ \\
$1000$ & $100000$ & $1000$ & $10$ & $1000$ & $2321.6$ & $1439.4$ & $657.9$ & $14.1$ \\
\bottomrule
\end{tabular}
\end{table}

According to \eqref{eq:defin_nupdate}, the parameter choice resulted in constant number of decision variable updates $\nupdate=1000$. Thus, we expect the solutions to look similar. This is confirmed in Figure \ref{fig:fishing2}. Its left part depicts the fishing rate $u$ for all four parameter settings while the right part depicts the number of fish $x$ for randomly selected scenarios. These solutions look similar and can be simply interpreted: In interval $[0,2]$ the process is driven to a steady state which is kept with relatively constant fishing and the amount of fish in the interval $[2,7]$. Since the steady state has a smaller amount of fish than the desired end-time level $\xdes=1.5$, the fishing rate drops in the interval $[7,10]$ and the number of fish increases.

Looking at Figure \ref{fig:fishing2} again, we would like to emphasize the biggest advantage of the stochastic descent: Even though for each $\lambda$ we performed only $\nepoch=1$ evaluation of $g(\cdot,\xi)$ on the whole dataset, we performed $\nupdate=1000$ updates of the decision variable. This resulted in a high-quality solution in only $106$ seconds.

\begin{figure}[!ht]
\begin{tikzpicture}
  \pgfplotsset{small,samples=10}
  \begin{groupplot}[group style = {group size = 2 by 1}, grid=major, grid style={dashed, gray!50}]
      \nextgroupplot[xlabel={Time}, xmin=0, xmax=10, title = {}, ylabel={Fishing rate}, legend style = legendStyleA]
          \addplot [solid] table[x index=0, y index=1] {\tabAA}; \addlegendentry{$n=100$, $\smin=100$}
          \addplot [dashed] table[x index=0, y index=1] {\tabAB}; \addlegendentry{$n=100$, $\smin=1000$}
          \addplot [dotted] table[x index=0, y index=1] {\tabAC}; \addlegendentry{$n=1000$, $\smin=100$}
          \addplot [dashdotted] table[x index=0, y index=1] {\tabAD}; \addlegendentry{$n=1000$, $\smin=1000$}%
      \nextgroupplot[xlabel={Time}, xmin=0, xmax=10, title = {}, ylabel={Number of fish}]
          \foreach \tikzN in {2,...,51} {
          \addplot [solid,gray!50] table[x index=0, y index=\tikzN] {\tabAE};}%
          \addplot [solid,black,thick] table[x index=0, y index=1] {\tabAE}; %
      \end{groupplot}
\node at ($(group c1r1) + (-0.55,-1.1)$) {\ref{grouplegendA}};       
\end{tikzpicture}
\caption{The fishing rate $u$ (left) and the number of fish $x$ (right) for the fishing application from Section \ref{sec:app1_1}. Note that the desired fish level (bold line) is prescribed only at the final time.}
\label{fig:fishing2}
\end{figure}
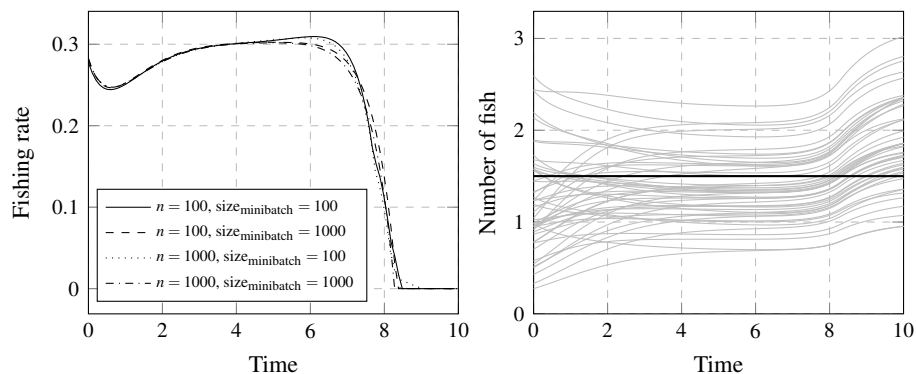

\subsection{Application 2: Optimal control of electrostatic separator}

We recall that the electrostatic separator takes two types of plastic or metal particles, triboelectrostatically charges them and exposes them to the electrostatic field. Due to the gravity, the particles fall downwards while due to the opposite polarities, they separate. At the bottom of the separator, there are three bins, one for each type and one for middling which is reseparated. The goal is to minimize the voltage such that the separation accuracy is at least $1-\eps$. The decision variables are the voltage $U$, the position of both electrodes $d_l$, $d_r$. The random variables are the particle mass $m$, charge $Q$ and its initial position $s_x(0),s_y(0)$. More detailed information is presented in Appendix \ref{app:app2}.

We will show the dependence of results on the number of scenarios $S$. In Figure \ref{fig:separator1}, we fix a design and repeatedly compute the separation accuracy on a randomly selected minibatch with $\smin\in\{10,100,1000\}$ and construct histograms. The variance is rather large, especially for the two smaller values. This plays a crucial role in our algorithm since we update the constraint only on a minibatch and smaller minibatches bring a rather large error to the computation. 

\begin{figure}[!ht]
\begin{tikzpicture}
  \pgfplotsset{footnotesize,samples=10}
  \begin{groupplot}[group style = {group size = 3 by 1, horizontal sep = 12pt}, grid=major, grid style={dashed, gray!50}, xmin=0.5, xmax=1, ymin=0, ymax=400, ybar, xtick pos=left]
      \nextgroupplot[ylabel={}]
          \addplot +[black, fill=gray, hist={bins=11, data min=0.5, data max = 1}] table [y index=0] {\tabB};
      \nextgroupplot[ylabel={}, yticklabels={}]
          \addplot +[black, fill=gray, hist={bins=11, data min=0.77, data max = 0.97}] table [y index=1] {\tabB};
      \nextgroupplot[ylabel={}, yticklabels={}]
          \addplot +[black, fill=gray, hist={bins=11, data min=0.868, data max = 0.928}] table [y index=2] {\tabB};
      \end{groupplot}    
\end{tikzpicture}
\caption{The histograms of the separation accuracy for minibatch sizes $\smin=10$ (left), $\smin=100$ (middle) and $\smin=1000$ (right). These minibatch sizes were used in the computations.}
\label{fig:separator1}
\end{figure}
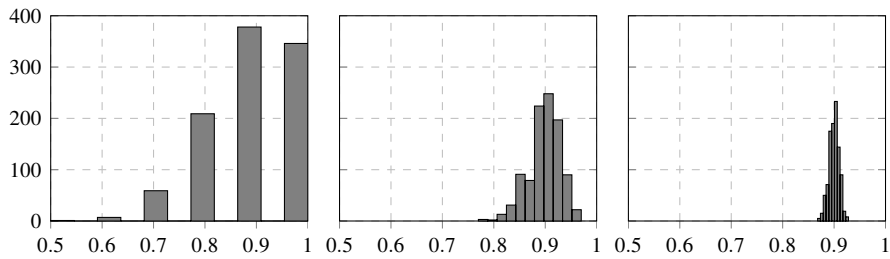

In Table \ref{tab:separator1} we depict the results for different values of the number of scenarios $S$ and the size of the minibatch $\smin$. Since we kept the number of epochs $\nepoch=20$ constant, this due to \eqref{eq:defin_nupdate} resulted in constant number of updates $\nupdate=2000$ of the decision variable. We show the optimal voltage $U$, the separator width $d_r-d_l$, the obtained failure levels $\eps_{\rm data}$ and $\eps_{\rm true}$ on the training data and testing data, respectively. Here, by training data, we understand the data on which the algorithm was trained while the testing data refer to computing the failure level on a large number of randomly generated scenarios which the algorithm has not used before. The last two columns refer to the total time and the time needed to evaluate the separation accuracy.

We observe several things. First, the Coulomb force $F_c$ is proportional to $\frac{U}{d_r-d_l}$. Since this force is the main force in the horizontal direction and the particles are supposed to fall into the correct bins, this ratio is constant. Second, the computed failure level $\eps_{\rm data}$ equals to the requested $\eps=0.1$ while the true failure level $\eps_{\rm true}$ is larger when the number of scenarios $S$ used for training is smaller. This makes sense as the algorithm overfitted to the training data.

\begin{table}[!ht]
\caption{The results for the separator application from Section \ref{sec:app1_2}. Based on the scenario size, it depicts the optimal voltage $U$, the optimal separator width $d_r-d_l$, the ratio $\frac{U}{d_r-d_l}$ proportional to the Coulomb force $F_c$, the failure level on the training data $\eps_{\rm data}$, the failure level outside of the training data $\eps_{\rm true}$, the total time and the time needed to evaluate the particle movement inside the separator.}
\label{tab:separator1}
\centering
\begin{tabular}{@{}lllllllll@{}}
\toprule
$S$ & $\smin$ & $U$ & $d_r-d_l$ & $U / (d_r-d_l)$ & $\eps_{\rm data}$ & $\eps_{\rm true}$ & Time [s] & Time $g$ [s] \\
\midrule
$1000$ & $10$ & $19308$ & $0.178$ & $1.088$ & $0.101$ & $0.116$ & $36$ & $23$ \\
$10000$ & $100$ & $22337$ & $0.211$ & $1.058$ & $0.100$ & $0.101$ & $69$ & $51$ \\
$100000$ & $1000$ & $23732$ & $0.227$ & $1.043$ & $0.100$ & $0.100$ & $531$ & $480$ \\
\bottomrule
\end{tabular}
\end{table}

In Figure \ref{fig:separator2} we show the obtained separator. Since the black particles are supposed to fly left and the grey particles are supposed to fly right, the algorithm has produced a good solution. The good results as presented in Table \ref{tab:separator1} and Figure \ref{fig:separator2} together with the large variance from Figure \ref{fig:separator1} indicates that it is indeed necessary to consider the delayed values as in \eqref{eq:update_z} to properly update the quantile.

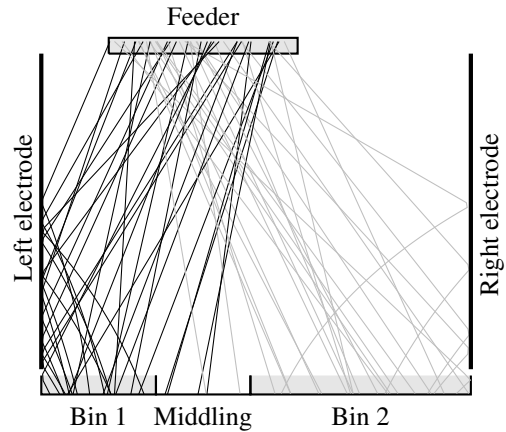
\begin{figure}[!ht]
\centering
\begin{tikzpicture}
\pgfplotsset{samples=10}
\begin{axis}[axis lines=none]
\addplot [black, thick, no marks, fill=gray!20] coordinates {(-0.05,0) (0.05,0) (0.05,0.05) (-0.05,0.05) (-0.05,0)};
\addplot [black, thick, no marks, fill=gray!20] coordinates {(-0.0857,-1.02) (-0.0857,-1.08)(-0.025,-1.08) (-0.025,-1.02)};
\addplot [black, thick, no marks, fill=gray!20] coordinates {(0.025,-1.02) (0.025,-1.08)(0.1417,-1.08) (0.1417,-1.02)};
\addplot [black, thick, no marks] coordinates {(-0.025,-1.02) (-0.025,-1.08)(0.025,-1.08) (0.025,-1.02)};
\foreach \tikzN in {0,...,24} {
\addplot [solid,gray!50] table[x index=\tikzN, y index=25+\tikzN] {\tabDA};
\addplot [solid,black] table[x index=\tikzN, y index=25+\tikzN] {\tabDB};}           
\addplot [black, ultra thick, no marks] coordinates {(-0.0857,0) (-0.0857,-1)};
\addplot [black, ultra thick, no marks] coordinates {(0.1417,0) (0.1417,-1)};
\node [above=1] at (axis cs: 0,0.05) {Feeder};
\node [above=1,rotate=90] at (axis cs: -0.0857,-0.5) {Left electrode};
\node [below=1,rotate=90] at (axis cs: 0.1417,-0.5) {Right electrode};
\node [below=1] at (axis cs: 0,-1.08) {Middling};
\node [below=1] at (axis cs: -0.0553,-1.08) {Bin 1};
\node [below=1] at (axis cs: 0.0833,-1.08) {Bin 2};
\end{axis}
\end{tikzpicture}
\caption{The schema of the free-fall electrostatic separator. Two types of plastics are charged with opposite polarities and placed into the feeder. Due to the gravity, they fall downwards and due to the electrostatic field between the electrodes, they separate.}
\label{fig:separator2}
\end{figure}

\subsection{Application 3: Optimal design of gas network}

For the gas network application we took the same setting as in \cite{adam.branda.heitsch.henrion.2018}. The schema of the used network with $1$ entry and $11$ exit nodes is depicted in Figure \ref{fig:gas}. The results are depicted in Table \ref{tab:app3_res1} (objective $f$) and Table \ref{tab:app3_res2} (failure level). As expected due to \eqref{eq:defin_nupdate}, the numbers on all diagonals offer comparable results.

\begin{figure}[!ht]
  \centering
  \includegraphics[width=0.5\textwidth]{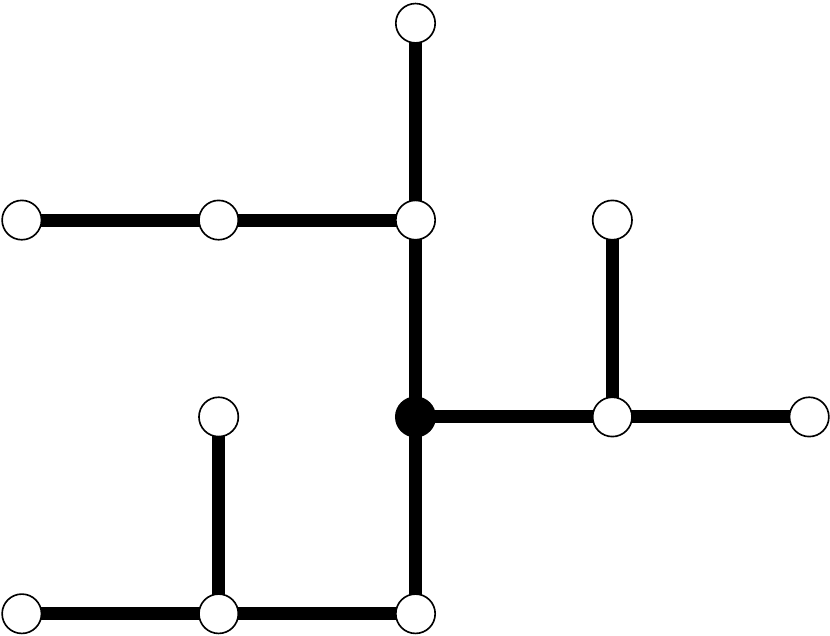}
  \caption{Schema for the gas network from Section \ref{sec:app1_3}.}
  \label{fig:gas}
\end{figure}

We would like to compare our results with the one in \cite{adam.branda.heitsch.henrion.2018}. While we managed to obtain the optimal objective $3144.68$ with failure level $0.1502$, the authors in \cite{adam.branda.heitsch.henrion.2018} reported the objective $3145.75$ with failure level $0.1500$. We can conclude that our method provides a comparable if not a better solution than the one reported earlier. Moreover, the computational time was less than one minute which is significantly faster than the time reported in \cite{adam.branda.heitsch.henrion.2018}.



\begin{table}[!ht]
\caption{The obtained objective $f$ for the gas network application from Section \ref{sec:app1_3}.}
\label{tab:app3_res1}
\centering
\begin{tabular}{@{}lrrrrr@{}}
\toprule
 & $100$ & $250$ & $500$ & $1000$ & $2000$ \\
\midrule
$2$ & $3248.80$ & $3169.86$ & $246478.36$ & $810451.46$ & $1202081.95$ \\
$5$ & $3145.20$ & $3145.04$ & $3221.21$ & $107410.95$ & $632311.21$ \\
$10$ & $3144.68$ & $3145.17$ & $3145.20$ & $3432.14$ & $110977.10$ \\
$20$ & $3144.95$ & $3145.56$ & $3145.80$ & $3145.10$ & $3584.16$ \\
$40$ & $3144.65$ & $3144.97$ & $3144.62$ & $3144.85$ & $3144.31$ \\
\bottomrule
\end{tabular}
\end{table}

\begin{table}[!ht]
\caption{The obtained failure level $\eps_{\rm data}$ for the gas network application from Section \ref{sec:app1_3}.}
\label{tab:app3_res2}
\centering
\begin{tabular}{@{}llllll@{}}
\toprule
 & $100$ & $250$ & $500$ & $1000$ & $2000$ \\
\midrule
$2$ & $0.113$ & $0.140$ & $0.412$ & $0.829$ & $0.965$ \\
$5$ & $0.150$ & $0.150$ & $0.153$ & $0.317$ & $0.756$ \\
$10$ & $0.150$ & $0.150$ & $0.150$ & $0.157$ & $0.333$ \\
$20$ & $0.150$ & $0.150$ & $0.150$ & $0.150$ & $0.159$ \\
$40$ & $0.150$ & $0.150$ & $0.150$ & $0.150$ & $0.150$ \\
\bottomrule
\end{tabular}
\end{table}

\paragraph{Acknowledgements} We would like to thank Holger Heitsch for providing us data for the gas network application.

This work was supported by National Natural Science Foundation of China (Grant No. 61850410534), Shenzhen Peacock Plan (Grant No. KQTD2016112514355531) and the Grant Agency of the Czech Republic (GA17-08182S, 19-28231X).

\bibliographystyle{abbrv}
\bibliography{Bibliography}

\appendix

\section{Properties of the quantile function $q(x)$}\label{app:q}

\begin{lemma}\label{lemma:q}
Let $g(\cdot,\xi_i)$ be Lipschitz continuous for all $i$. Then $q$ is Lispchitz continuous. Moreover, if $i(\hat x)$ is unique and if $g(\cdot,\xi_{i(\hat x)})$ is differentiable around some $\hat x$, then $q$ is differentiable at $\hat x$.
\end{lemma}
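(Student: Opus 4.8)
The plan is to exploit the fact that, with finitely many equiprobable scenarios $\xi_1,\dots,\xi_S$, the quantile $q(x)$ is nothing but an order statistic of the sample $\{g(x,\xi_i)\}_{i=1}^S$. First I would fix $k := \lceil (1-\eps)S\rceil$ and read off from \eqref{eq:defin_q} that $q(x)$ equals the $k$-th smallest of the values $g(x,\xi_1),\dots,g(x,\xi_S)$: indeed $\frac1S\sum_{i=1}^S\chi(g(x,\xi_i)\le t)\ge 1-\eps$ holds exactly when at least $k$ of these values are $\le t$, so the infimum over such $t$ is the $k$-th order statistic. This identifies $q$ as a single fixed order-statistic map applied to the $S$-tuple of Lipschitz functions.

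For Lipschitz continuity I would use the min--max representation of the $k$-th order statistic,
\[
q(x) = \min_{\substack{J\subseteq\{1,\dots,S\}\\ |J|=k}}\ \max_{j\in J} g(x,\xi_j).
\]
Let $L$ be a common Lipschitz constant of the finitely many functions $g(\cdot,\xi_i)$. A pointwise maximum of $L$-Lipschitz functions is again $L$-Lipschitz, and so is a pointwise minimum; since $J$ ranges over a finite collection of index sets, $q$ is a finite minimum of finite maxima and is therefore $L$-Lipschitz.

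For differentiability at $\hat x$, write $v_{(1)}\le\dots\le v_{(S)}$ for the sorted values $g(\hat x,\xi_i)$. The hypothesis that the realizing index $i(\hat x)=:i^*$ is unique means that the $k$-th smallest value is attained by a single scenario, which gives the strict two-sided gap $v_{(k-1)}<v_{(k)}<v_{(k+1)}$ with $v_{(k)}=g(\hat x,\xi_{i^*})$ (with the obvious one-sided modification when $k\in\{1,S\}$). By continuity of each $g(\cdot,\xi_i)$ these strict inequalities persist on a neighborhood $U$ of $\hat x$: for every $x\in U$ the same $k-1$ scenarios stay strictly below $g(x,\xi_{i^*})$ and the same $S-k$ stay strictly above, so the position of $i^*$ in the ordering is frozen and $q(x)=g(x,\xi_{i^*})$ throughout $U$. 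Hence $q$ agrees locally with the differentiable function $g(\cdot,\xi_{i^*})$ and is differentiable at $\hat x$ with $\nabla q(\hat x)=\nabla_x g(\hat x,\xi_{i^*})$, matching \eqref{eq:q_equality_1}.

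The main obstacle is getting the order-statistic bookkeeping exactly right: pinning down $k$ from \eqref{eq:defin_q} in the presence of possible ties, and verifying that ``uniqueness of the realizing index'' is precisely the hypothesis that yields the strict two-sided gap needed to freeze the active scenario on a neighborhood. Once that combinatorial step is in place, both the min--max Lipschitz estimate and the local-coincidence differentiability argument are routine.
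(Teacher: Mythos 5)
Your proof is correct, and it diverges from the paper's argument in an instructive way on the Lipschitz part. The paper proves Lipschitz continuity by a local perturbation argument: it fixes $\bar x$, partitions the scenarios into those attaining the quantile ($I$), those strictly below ($J_1$) and strictly above ($J_2$), argues by continuity that for $y$ near $\bar x$ the quantile $q(y)$ is realized by some index $i(y)\in I$, and then bounds $\nrm{q(y)-q(\bar x)}=\nrm{g(y,\xi_{i(y)})-g(\bar x,\xi_{i(y)})}\le L_g\norm{y-\bar x}$; this gives local Lipschitz continuity with a uniform constant, and the passage to a global estimate is left implicit. Your route instead identifies $q$ as the $k$-th order statistic with $k=\lceil(1-\eps)S\rceil$ and invokes the representation
\[
q(x) = \min_{\substack{J\subseteq\{1,\dots,S\}\\ \nrm{J}=k}}\ \max_{j\in J} g(x,\xi_j),
\]
which yields the global Lipschitz bound in one stroke, since finite minima and maxima of $L$-Lipschitz functions are $L$-Lipschitz; this is cleaner and avoids the local-to-global gluing the paper glosses over. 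For differentiability the two arguments coincide in substance: uniqueness of the realizing index forces the active set to be a singleton (in your language, the strict gap $v_{(k-1)}<v_{(k)}<v_{(k+1)}$), continuity freezes this situation on a neighborhood, and $q$ locally coincides with the single differentiable function $g(\cdot,\xi_{i(\hat x)})$. If anything, your write-up is more complete here, as the corresponding sentence in the paper's proof is truncated; the only bookkeeping worth spelling out in a final version is the observation that ties are exactly what uniqueness of $i(\hat x)$ excludes, which you already flag.
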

\begin{proof}
Fix any $\bar x$. Due to \eqref{eq:defin_q}, there is a disjoint partition $I$, $J_1$ and $J_2$ of $\{1,\dots,S\}$ such that
$$
\aligned
g(\bar x,\xi_i) &= q(\bar x)\text{ for all }i\in I, \\
g(\bar x,\xi_i) &< q(\bar x)\text{ for all }i\in J_1, \\
g(\bar x,\xi_i) &> q(\bar x)\text{ for all }i\in J_2.
\endaligned
$$
Due to the assumed continuity of $g(\cdot,\xi_i)$, there is a neighborhood of $\bar x$ such that for all $y$ from this neighborhood, there exist an index $i(y)\in I$ such that
\begin{equation}\label{eq:q_equality_2}
g(y,\xi_{i(y)}) = q(y).
\end{equation}
But since $i(y)\in I$, we have
$$
\nrm{q(y) - q(\bar x)} = \nrm{g(y,\xi_{i(y)}) - g(\bar x,\xi_{i(y)})} \le L_g\norm{y-\bar x},
$$
where $L_g$ is the Lispchitz constant of $g(\cdot,\xi_{i(y)})$. Thus, $q$ is Lipschitz continuous. If the index in \eqref{eq:q_equality_1}, then the index in \eqref{eq:q_equality_2} is unique and $q$. The second part follows from the fact that $I$ is unique whenver $i(\hat x)$ is unique.
\end{proof}

\added{
\begin{proof}[Proof of Lemma \ref{lemma:q_conv}]\ 
Denote the Lipschitz constants of $f$ and $g$ by $L_f$ and $L_g$, respectively. The notation $\nabla_x f(x,\xi_i)$ means either the gradient and if $f(\cdot,\xi_i)$ is not differentiable at $x$, then any element of the Clarke subdifferential. Note that in both cases we have $\norm{\nabla_x f(x,\xi_i)}\le L_f$. We will use the same notation for $\nabla_x g(x,\xi_i)$.

Since $x^k$ is uniformly bounded due to compactness of $X$ and since $g(\cdot,\xi_i)$ is continuous, there is some $B_g$ such that $\nrm{g(x^k,\xi_i)}\le B_g$ for all $i$ and $k$. Since $X$ is convex and since the projection onto a convex set is $1$-Lipschitz, we observe that
$$
\aligned
\norm{x^{k+1} - x^k} &= \norm{P_X(y^{k+1}) - P_X(x^k)}\le \norm{y^{k+1} - x^k} \\
&= \alpha^k\left\| \frac1{\nrm{I^k}} \sum_{i\in I^k}\nabla_x f(x^k,\xi_i) + \lambda \phi'(q^k)\nabla_x g(x^k,\xi_{i^k}) \right\| \\
&\le \alpha^k(L_f + \lambda \max\{q^k,0\}L_g) \le \alpha^k(L_f + \lambda B_gL_g)
\endaligned
$$

We recall that $q(x^k)$ is the quantile of $\{g(x^k,\xi_i)\}_{i=1}^S$ while $q^k$ is the quantile of $\{z_i^k\}_{i=1}^S$. Moreover, from \eqref{eq:update_z} and the way in which we choose minibatches we observe that for all $i$ there exists some $j\in\{0,2\nmin-1\}$ such that $z_i^k=g(x^{k-j},\xi_i)$. This implies
$$
\aligned
\nrm{g(x^k,\xi_i)-z_i^k} &= \nrm{g(x^k,\xi_i)-g(x^{k-j},\xi_i)} \le L_g\norm{x^k-x^{k-j}}\le L_g\sum_{l=0}^{j-1}\norm{x^{k-l}-x^{k-l-1}} \\
&\le L_g\sum_{l=0}^{2\nmin}\norm{x^{k-l}-x^{k-l-1}}\le L_g(L_f + \lambda B_gL_g)\sum_{l=0}^{2\nmin}\alpha^{k-l-1} \\
\endaligned
$$
This implies that $\nrm{g(x^k,\xi_i)-z_i^k}\to 0$ whenever $\alpha^k\to 0$ or whenever $\{x^k\}$ is convergent. But due to the definition of the quantile this means $\nrm{q(x^k)-q^k}\to 0$.
\end{proof}
}

\begin{remark}\label{remark:indices}
\added{
In this remark, we provide a very informal analysis showing that if there are multiple indices $i\in I$ such that $g(x,\xi_i)=q(x)$ for all $i\in I$, then $q$ may still be differentiable at $x$ and its gradient equals to a convex combination of $\nabla_x g(x,\xi_i)$ for $i\in I$.

Consider the case when $\xi\in\R$ is an absolutely continuous random variable and when $g$ is differentiable at $(x,\xi_i)$ for all $i\in I$. Since we aim to compute $\nabla q(x)$, we perturb $x$ by $\delta x$ and $t=q(x)$ by some $\delta t$. Then we locally around $\xi_i$ approximate 
$$
\aligned
\{\delta\xi|\ g(x,\xi_i+\delta\xi)\le t\} &\approx \{\delta\xi|\ g(x,\xi_i)+ \nabla_\xi g(x,\xi_i)\delta\xi \le t\} \\
&= \{\delta\xi|\ \nabla_\xi g(x,\xi_i)\delta\xi \le t - g(x,\xi_i)\}, \\
\{\delta\xi|\ g(x+\delta x,\xi_i+\delta\xi)\le t+\delta t\} &\approx \{\delta\xi|\ g(x,\xi_i)+ \nabla_x g(x,\xi_i) \delta x + \nabla_\xi g(x,\xi_i)\delta\xi \le t +\delta t\} \\
&= \{\delta\xi|\ \nabla_\xi g(x,\xi_i)\delta\xi \le t - g(x,\xi_i)- \nabla_x g(x,\xi_i) \delta x +\delta t\}
\endaligned
$$
For simplicity assume that $\nabla_\xi g(x,\xi_i)>0$ for all $i\in I$. Note that the whole analysis can be performed for $\nabla_\xi g(x,\xi_i)\neq 0$. Then we locally around $\xi_i$ have
\begin{equation}\label{eq:rem2}
\aligned
\{\delta\xi|\ g(x,\xi_i+\delta\xi)\le t\} &\approx \{\delta\xi|\ \delta\xi \le \nabla_\xi^{-1}g(x,\xi_i)\left(t - g(x,\xi_i)\right)\}, \\
\{\delta\xi|\ g(x+\delta x,\xi_i+\delta\xi)\le t+\delta t\} &\approx \{\delta\xi|\ \delta\xi \le \nabla_\xi^{-1}g(x,\xi_i)\left(t - g(x,\xi_i)- \nabla_x g(x,\xi_i) \delta x +\delta t\right)\}.
\endaligned
\end{equation}

Since $\xi$ is absolutely continuous, we have
$$
\PP(g(x+\delta x,\xi)\le t+\delta t) = \PP(g(x,\xi)\le t) = 1-\eps,
$$
which implies
\begin{equation}\label{eq:rem3}
\PP(g(x+\delta x,\xi)\le t+\delta t) - \PP(g(x,\xi)\le t) = 0.
\end{equation}
The local change of the quantity on the left-hand side of \eqref{eq:rem3} around $\xi_i$ with $i\in I$ amounts due to \eqref{eq:rem2} and the definition of the distribution function to
\begin{equation}\label{eq:rem4}
\aligned
&\PP(g(x+\delta x,\xi_i)\le t+\delta t) - \PP(g(x,\xi_i)\le t) \\
&\qquad= F_\xi\left(\xi_i+\nabla_\xi^{-1}g(x,\xi_i)\left(t - g(x,\xi_i) -  \nabla_x g(x,\xi_i) \delta x +\delta t\right)\right) - F_\xi\left(\xi_i+\nabla_\xi^{-1}g(x,\xi_i)\left(t - g(x,\xi_i)\right)\right).
\endaligned
\end{equation}
For small $\delta x$, the quantity on the left-hand side of \eqref{eq:rem3} changes only around $\xi_i$ for $i\in I$. Thus, due to \eqref{eq:rem3} and \eqref{eq:rem4} we have
$$
\sum_{i\in I} F_\xi\left(\xi_i+\nabla_\xi^{-1}g(x,\xi_i)\left(t - g(x,\xi_i) -  \nabla_x g(x,\xi_i) \delta x +\delta t\right)\right) - \sum_{i\in I} F_\xi\left(\xi_i+\nabla_\xi^{-1}g(x,\xi_i)\left(t - g(x,\xi_i)\right)\right)=0,
$$
where $F_\xi$ is the distribution function of $\xi$. Assuming that $F_\xi$ is differentiable, the first-order approximation with respect to $\xi$ of the previous equality reads
$$
\sum_{i\in I}\nabla_\xi F_\xi\left(\xi_i+\nabla_\xi^{-1}g(x,\xi_i)\left(t - g(x,\xi_i)\right)\right)\left(\nabla_\xi^{-1}g(x,\xi_i)\left(\delta t - \nabla_x g(x,\xi_i) \delta x\right)\right)=0,
$$
from which we deduce
$$
\delta t = \sum_{i\in I}\frac{a_i}{\sum_{j\in I}a_j}\nabla_x g(x,\xi_i) \delta x
$$
with
$$
a_i = \nabla_\xi F_\xi\left(\xi_i+\nabla_\xi^{-1}g(x,\xi_i)\left(t - g(x,\xi_i)\right)\right)\nabla_\xi^{-1}g(x,\xi_i).
$$

The analysis suggests that
$$
\nabla q(x) = \sum_{i\in I}\frac{a_i}{\sum_{j\in I}a_j}\nabla_x g(x,\xi_i).
$$
Since $a_i\ge 0$ as $F_\xi$ is the distribution function and thus non-decreasing, we obtain that $\nabla q(x)$ is a convex combination of $\nabla_x g(x,\xi_i)$ for $i\in I$.
}
\end{remark}

\section{Efficient algorithm for computing the quantile}\label{app:quantile}

\added{
In Algorithm \ref{alg:sgd} we used an on-the-shelf algorithm to compute the quantile $q^k$. This can be done in $O(S)$. However, there is a more efficient method which we present here. Its complexity is also $O(S)$ but it needs only one pass through the array.} Its basic idea is to keep $z^k$ sorted at every iteration. Due to \eqref{eq:update_z}, $z^{k+1}$ differs from $z^k$ only on the minibatch $I^{k+1}$. Since $z^k$ is already sorted, it suffices to sort the new values $g(x^{k+1},\xi_i)$ on $I^{k+1}$ and then merge these two sorted arrays. This can be done in one pass through both arrays. Then we obtain the sorted version of $z^{k+1}$ and the quantile equals to index $\lceil S(1-\eps)\rceil$ of this sorted array.

Now we formalize this idea. At iteration $k$ we know the sorting permutation $\pi$ of $\{1,\dots,S\}$ which sorts $z^k$ into $s^k$ and we compute the sorting permutation $\varphi$ of $I^{k+1}$ which sorts $g_i^k:=g(x^{k+1},\xi_i)$, $i\in I^{k+1}$ into $h_j^k$. Namely, we have
\begin{equation}\label{eq:sort}
\aligned
s_i^k &= z^k_{\pi(i)}, &&\pi(i)\in \{1,\dots,S\}, \\
h_j &= g_{\varphi(j)}, &&\varphi(j)\in I^{k+1}.
\endaligned
\end{equation}
According to \eqref{eq:update_z}, we want to update $z^k$ only on minibatch $I^{k+1}$. Then we have
\begin{equation}\label{eq:sort1}
z_i^{k+1} = \begin{cases} g_i^k=g(x^{k+1},\xi_i) &\text{if }i\in I^{k+1}, \\ z_i^k &\text{otherwise.} \end{cases}
\end{equation}
Thus, we have two sorted arrays $s^k$ and $h^k$ from \eqref{eq:sort} and we need to merge them together.

To this aim, we will employ three indices: $i$ will run on $s^k$, $j$ will run on $h^k$ and finally $l$ will run on $s^{k+1}$. We initialize all indices to $i=j=l=1$. In every iteration, we first check whether $\pi(i)\in I^{k+1}$. If this is the case, $z_{\pi(i)}^k$ was replaced by $g_{\pi(i)}$ in \eqref{eq:sort1}. Thus, we are not interested in $s_i^k=z_{\pi(i)}^k$, we increase $i$ by one and repeat. In the opposite case of  $\pi(i)\notin I^{k+1}$ we set
$$
s_l^{k+1} = \begin{cases} s_i^k &\text{if }s_i^k \le h_j^k, \\ h_j^k &\text{otherwise.} \end{cases}
$$
In other words, we insert to $s_l^{k+1}$ the minimum of $s_i^k$ and $h_j^k$. Since both $s^k$ and $h$ are sorted, $s^{k+1}$ is sorted as well. Now, we need to obtain a permutation $\psi$ mapping $z^{k+1}$ into its sorted variant $s^{k+1}$, namely it needs to satisfy
\begin{equation}\label{eq:sort3}
s_l^{k+1}=z_{\psi(l)}^{k+1}.
\end{equation}
Due to \eqref{eq:sort}, this can be simply obtained by
$$
\psi(l) = \begin{cases} \pi(i) &\text{if }s_i^k \le h_j^k, \\ \varphi(j) &\text{otherwise.} \end{cases}
$$
Since $s_l^{k+1}$ has been filled, we increase $l$ by one. Finally, if $s_l^{k+1}$ was filled by $s_i^k$, we increase $i$ by one and if $s_l^{k+1}$ was filled by $h_j^k$, we increase $j$ by one. In both cases, we repeat the whole precedure. After sorting $z^{k+1}$, the quantile can be computed as
$$
q^{k+1}=s_{\rm index}^{k+1},
$$
where $\rm{index} = \lceil S(1-\eps)\rceil$.

To summarize, the procedure above takes as input the sorted array $s^k$ from the previous iteration and the permutation $\pi$ satisfying \eqref{eq:sort}. It then replaces values $g$ on indices $I^{k+1}$ and returns the sorted array $s^{k+1}$ and the permutation satisfying \eqref{eq:sort3}. These outputs can be directly used as inputs for the next iteration and thus, the sorting can be done efficiently. Note that it is not even necessary to store the unsorted array $z^k$.

This whole procedure is written down in Algorithm \ref{alg:sort}. We made two small changes to the procedure described above. First, in steps \ref{alg:sort_l1}-\ref{alg:sort_l2} we needed to take care about indices overflowing the arrays. Second, it may be time-consuming to check whether $\pi(i)\in I^{k+1}$. For this reason, in step \ref{alg:sort_l0} we assume that $I^{k+1}=\{\iLow,\dots,\iHigh\}$ and then insert the randomness into selection minibatches by permuting $\xi$. \added{This new computation of the quantile can be directly inserted in Algorithm \ref{alg:sgd}. Note that the only change is a faster computation of the quantile but the computed quantile is the same.}

\begin{algorithmaux}[!ht]
\begin{algorithmic}[1]
\Require sorted arrays $s^k$ and $h^k$ with the corresponding permutations \eqref{eq:sort}
\Ensure sorted array $s^{k+1}$ with the corresponding permutation \eqref{eq:sort3}
\State $i = j = k \gets 1$
\While {\textbf{true}}
\If {$\pi(i) \ge \iLow$\textbf{ and } $\pi(i) \le \iHigh$} \label{alg:sort_l0} \Comment{Value was replaced, ignore it}
\State $i\gets i+1$
\State \textbf{if }$i> S$\textbf{ then break while}
\Else
\If {$s_i^k \le h_j$} \Comment{Add $s_i^k$ to the new array}
\State $s_l^{k+1}\gets s_i^k$
\State $\psi(l)\gets \pi(i)$
\State $l\gets l+1$, $i\gets i+1$
\State \textbf{if }$i> S$\textbf{ then break while}
\Else \Comment{Add $h_j$ to the new array}
\State $s_l^{k+1}\gets h_j^k$
\State $\psi(l)\gets \varphi(j)$
\State $l\gets l+1$, $j\gets j+1$
\State \textbf{if }$j> \iHigh-\iLow+1$\textbf{ then break while}
\EndIf
\EndIf
\EndWhile
\If {$i<S$} \label{alg:sort_l1} \Comment{Handle the remaining terms}
\State Array $h^k$ has been inserted to $s^{k+1}$. Insert the remaining of $s^k$ to $s^{k+1}$ in a similar way as above.
\Else
\State Array $s^k$ has been inserted to $s^{k+1}$. Insert the remaining of $h^k$ to $s^{k+1}$ in a similar way as above.
\EndIf \label{alg:sort_l2}
\State $q^{k+1}\gets s_{\rm index}^{k+1}$ 
\end{algorithmic}
\caption{For finding the quantile $q^{k+1}$ from \eqref{eq:update_q}}
\label{alg:sort}
\end{algorithmaux}

\section{Backpropagation}\label{app:backprop}

In this short section, we describe how the well-known method backpropagation \cite{hecht.1992} computes derivatives. Consider a general function
$$
F(u):=\sum_{t=0}^T f_t(u_t,x_t),
$$
where $x_0$ is given and we have
$$
x_{t+1} = h_t(u_t, x_t).
$$
Since for every $u$, we can uniquely compute $x$, we want to compute the derivative of the objective with respect to $u$. We have
$$
\frac{\partial F}{\partial u_t} = \nabla_u f_t(u_t,x_t) + \sum_{s=t+1}^T \nabla_x f_s(u_s,x_s)\frac{\partial x_s}{\partial u_t} = \nabla_u f_t(u_t,x_t) + \sum_{s=t+1}^T \nabla_x f_s(u_s,x_s)\frac{\partial x_s}{\partial x_{s-1}}\dots\frac{\partial x_{t+2}}{\partial x_{t+1}}\frac{\partial x_{t+1}}{\partial u_t}
$$
Based on this expression, define
$$
a_t := \sum_{s=t+1}^T \nabla_x f_s(u_s,x_s)\frac{\partial x_s}{\partial x_{s-1}}\dots\frac{\partial x_{t+2}}{\partial x_{t+1}}.
$$

Then with $a_T=0$, we have the chaining relation
$$
a_t = a_{t+1}\frac{\partial x_{t+2}}{\partial x_{t+1}} + \nabla_x f_{t+1}(x_{t+1},u_{t+1}) = a_{t+1}\nabla_x h_{t+1}(u_{t+1},x_{t+1}) + \nabla_x f_{t+1}(x_{t+1},u_{t+1})
$$
and the derivative can be computed as 
$$
\frac{\partial F}{\partial u_t} = \nabla_u f_t(u_t,x_t) + a_t\frac{\partial x_{t+1}}{\partial u_t} = \nabla_u f_t(u_t,x_t) + a_t\nabla_u h_t(x_t,u_t).
$$
Note that the function value $F(u)$ can be computed in one forward swipe and the Jacobian $\nabla J(u)$ can be computed in one backward swipe.

\section{Computation of Derivatives}\label{app:derivatives}

Similarly, the objective function and the constraints can be discretized into
\begin{equation}\label{eq:app1_functions}
\aligned
f(u) &:= f(u,x(u)) := \Delta t\sum_{t=0}^{T-1}\left( p_tu_tx_t - d_tu_t^2x_t^2 - c_tu_t \right), \\
g(u) &:= g(u,x(u)) := x_T.
\endaligned
\end{equation}

In this section, we compute the derivatives for functions \eqref{eq:app1_functions} form the fishing application. Since the derivatives of functions from the separator application can be computed in an identical way, we omit it here. Using the backpropagation method from Section \ref{app:backprop}, for $t=0,\dots,T-1$ we can compute the derivative by
$$
\aligned
\frac{\partial f}{\partial u_t} &= \Delta t\left( px_t - 2du_tx_t^2 - c \right)      - a_t \Delta t x_t,
\endaligned
$$
where $a_T=a_{T-1}=0$ and for $t=0,\dots,T-2$ we set
$$
a_t = a_{t+1}\left(1 + \Delta t\left(r - \frac 2Krx_{t+1} - u_{t+1}\right)\right) + \Delta t \left( pu_{t+1} - 2du_{t+1}^2x_{t+1} \right).
$$
Note that this is a discretized version of the adjoint equation which can be obtained by the techniques from optimal control theory \cite{clarke.2013}. Note that the computation above contains only vectors while a naive application of the chain rule would result in $\nabla_u f(u,x(u)) + \nabla_x f(u,x(u))\frac{\partial x}{\partial u}$, where $\frac{\partial x}{\partial u}$ is a matrix.
Similarly, for the terminal state function $g$ we have
$$
\frac{\partial g}{\partial u_t} = -b_t\Delta t x_t,
$$
where
$b_T=0$ and
$$
\aligned
b_{T-1} &= b_T\left(1 + \Delta t\left(r - \frac 2Krx_{T} - u_{T}\right)\right) + 1, \\
b_t &= b_{t+1}\left(1 + \Delta t\left(r - \frac 2Krx_{t+1} - u_{t+1}\right)\right).
\endaligned
$$

\section{Differences between the batch and stochastic gradient descents}\label{app:diff}

In Section \ref{sec:app1_1} we showed that in problem \eqref{eq:problem1} we can replace the random variable $p_t$ by its expectation $\EE p_t$. These two problems are equivalent when the batch gradient descent is applied. However, this is no longer true for the stochastic gradient descent. To show this, we proposed the following experiment. We fixed the optimized design and randomly generated one set with $\smin$ scenarios. Then we considered the same set where $p_t$ was replaced by its expectation $\EE p_t$. When we computed the difference in the values of the objective $f$, the differences were negligible. However, the difference in Jacobians of the objective $\nabla f$ was rather big. The value of the gradient is showed in Figure \ref{fig:fishing4} (left) while the relative difference between both settings is depicted in Figure \ref{fig:fishing4} (right). Due to this difference, the two problems above are no longer equivalent when the stochastic gradient descent is used.

\begin{figure}[!ht]
\begin{tikzpicture}
  \pgfplotsset{small,samples=10}
  \begin{groupplot}[group style = {group size = 2 by 1}, grid=major, grid style={dashed, gray!50}]
      \nextgroupplot[xlabel={Time}, xmin=0, xmax=10, title = {}, ylabel={Gradient}, legend style = legendStyleB]
          \addplot [solid] table[x index=0, y index=1] {\tabAF}; \addlegendentry{$\smin=100$}
          \addplot [dashed] table[x index=0, y index=2] {\tabAF}; \addlegendentry{$\smin=1000$}
      \nextgroupplot[xlabel={Time}, xmin=0, xmax=10, ymin=0, ymax=1, title = {}, ylabel={Gradient relative error}]
          \addplot [solid] table[x index=0, y index=3] {\tabAF};
          \addplot [dashed] table[x index=0, y index=4] {\tabAF}; 
      \end{groupplot}
\node at ($(group c1r1) + (-0.55,-1.1)$) {\ref{grouplegendB}};       
\node at ($(group c2r1) + (0.55,1.1)$) {\ref{grouplegendB}};       
\end{tikzpicture}
\caption{The gradient (left) and the relative error in gradient when the random prices are replaced by their expectation (right). Both figures are for a fixed design.}
\label{fig:fishing4}
\end{figure}
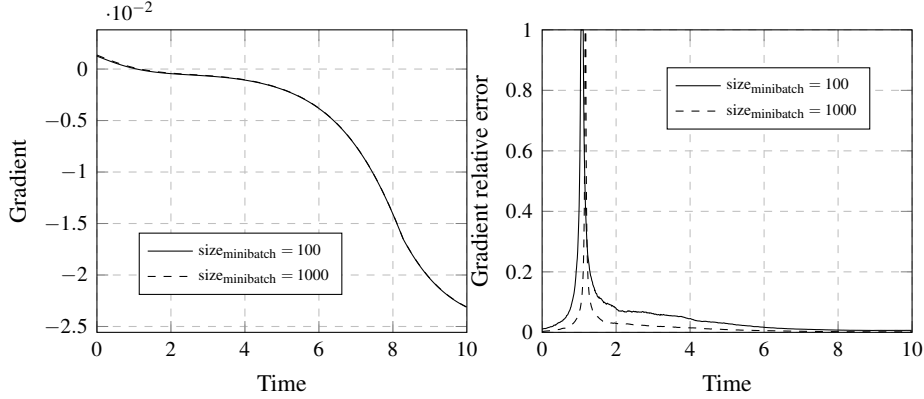

\section{Additional information for the numerical part}

In this section, we provide additional information about parameter choices from the numerical section.

\subsection{Application 1: Optimal control of fish population}\label{app:app1}

The random variables followed the $r\sim N(1,0.01)$, $K\sim N(2,0.25)$ and $x_0\sim U(\frac{K}4,K)$ distributions where $r$ was truncated at $0.01$ and $K$ was truncated at $1$. The random prices were initialized in $\bar p_t=2$, $\bar d_t=0.1$, $\bar c_t=1.5$ and then followed the process
$$
\bpm p_0 \\ d_0 \\ c_0\epm = \bpm 2 \\ 0.1 \\ 1.5\epm,\qquad \bpm p_{t+1} \\ d_{t+1} \\ c_{t+1}\epm = \bpm p_t \\ d_t \\ c_t \epm + N\left(\bpm 0\\ 0\\0\epm, \frac{0.01}{n}\bpm 1 & 0.025 &  0.5 \\ 0.025 &0.05 & 0.025\\ 0.5&  0.025 & 1\epm \right).
$$

\subsection{Application 2: Optimal control of electrostatic separator}\label{app:app2}

For the random variables, we loosely followed \cite{kacerovsky.2016}. For the positively changed particles, we had $m\sim N(6\cdot 10^{-6}\text{kg}, (2\cdot 10^{-6}\text{kg})^2)$ and $Q\sim N(5.5\cdot 10^{-11}\text{C}, (2.2\cdot 10^{-11}\text{C})^2)$ while for the negatively charged particles, we had $m\sim N(8\cdot 10^{-6}\text{kg}, (1.5\cdot 10^{-6}\text{kg})^2)$ and $Q\sim N(-5\cdot 10^{-11}\text{kg}, (2\cdot 10^{-11}\text{kg})^2)$. The mass was truncated at $1e-7$kg while there was no truncation for the charge. The initial position followed $s_x\sim U(-0.05\text{m},0.05\text{m})$ and $s_y\sim U(0\text{m},0.05\text{m})$. The separator was $1$m tall. The middle bin was located at $[-0.025\text{m},0.025\text{m}]$.

\end{document}